\newtheorem{theorem}{Theorem}
\theoremstyle{definition}
\newtheorem{definition}[theorem]{Definition}
\newtheorem{claim}[theorem]{Claim}
\DeclareMathOperator\dist{dist}
\renewcommand{\Re}{\operatorname{Re}}
\renewcommand{\Im}{\operatorname{Im}}
\renewcommand{\P}{\mathcal{P}}
\author{Anna Muranova}
 \address{Anna Muranova
5050 Institut f\"ur Diskrete Mathematik,
Steyrergasse 30/II,
8010 Graz, Austria } 
 \email{anna.muranova@gmail.com}
\title[Infinite networks and Dirichlet problem]{The effective impedances of infinite ladder networks and Dirichlet problem on graphs}
\thanks{This research was supported by IRTG 2235 Bielefeld-Seoul ``Searching for the regular in the irregular:
Analysis of singular and random systems".}
\begin{document}

\maketitle



\begin{abstract}
We calculate effective impedances of infinite $LC$- and $CL$- ladder networks as limits of effective impedances of finite network approximations, using a new method, which involves precise mathematical concepts. These concepts are related to the classical differential operators on weighted graphs. As an auxiliary result, we solve a  discrete boundary value Dirichlet problem on a finite ladder network.
\end{abstract}

{\footnotesize
{\bf Keywords:} {weighted graphs, analysis on graphs, electrical network, ladder network, Feynman's ladder,  effective impedance, Laplace operator.} 
\smallskip

{\bf Mathematics Subject Classification 2010:}{ 05C22,  34B45, 39A12,  	94C05,  	78A25.} 
}

\section{Introduction}
In his \emph{Lectures on Physics} Richard Feynman firstly states the effective impedance of an infinite $LC$-network (\cite{Feynman2}). Unfortunately, the result was not carefully explained. The first attempt to explain it is given in \cite{Enk}. Then in \cite{Klimo}, \cite{UA}, \cite{UY} there are more investigations on this topic. Moreover, in \cite{Yoon} the effective impedance is calculated as the limit of effective impedances of finite network approximations, assuming that each edge has an infinitely small resistance. In the same paper the effective impedance of an infinite $CL$-network is calculated. 

In the present paper we consider impedances as functions on $\lambda\in \Bbb C\setminus\{0\}$. Although initially $\lambda=i\omega$ (where $\omega$ is a frequency of alternating current, $i$ is the imaginary unit), we extend a function of impedance from imaginary axis to the whole complex plane (see \cite{Brune}).Then a network is considered as a complex-weighted graph (as in previous papers by the author \cite{Muranova3}, \cite{Muranova1}), whose weights depend on a complex parameter $\lambda$. We consider discrete boundary value  Dirichlet problems (see \cite{Muranova1}) on finite approximations of an infinite ladder. Afterwards, we investigate a convergence of the sequences of effective impedances of  the finite approximations for $LC$- and $CL$- ladders in different domains of the complex plane $\lambda$. We prove that the limits exist everywhere, except some segments on the imaginary axis.

Therefore, we calculate the known effective impedances of ladder networks using a new mathematical method, involving precise mathematical definitions. Moreover, we extend the result for $\lambda=i\omega$, $\omega>0$, to the whole complex plane $\lambda$.
\section{Definition of a network}
Let us remind mathematical definitions of electrical network and effective impedance for finite and infinite networks (see \cite{Muranova3}).

Let $(V,E)$ be a locally finite connected graph, where $V$ is a
set of vertices, $\left\vert V\right\vert \geq 2$, and $E$ is a set of
(unoriented) edges.

Assume that each edge $xy$, where $x,y\in V$, is equipped with a resistance $R_{xy}$,
inductance $L_{xy}$, and capacitance $C_{xy}$, where $R_{xy},L_{xy}\in
\lbrack 0,+\infty )$ and $C_{xy}\in (0,+\infty ]$, which correspond to the
physical resistor, inductor (coil), and capacitor (see e.g. \cite{Feynman1}, \cite{Hughes}
). It will be convenient to use the inverse capacity 
\begin{equation*}
D_{xy}=\frac{1}{C_{xy}}\in \lbrack 0,+\infty ).
\end{equation*}%
We always assume that for any edge $xy\in E$ 
\begin{equation*}
R_{xy}+L_{xy}+D_{xy}>0.
\end{equation*}%
The \emph{impedance} of the edge $xy$ is defined as the following function
of a complex parameter $\lambda $: 
\begin{equation*}
z_{xy}^{(\lambda )}=R_{xy}+L_{xy}\lambda +\frac{D_{xy}}{\lambda }.
\end{equation*}%
Although the impedance has a physical meaning only for $\lambda =i\omega $,
where $\omega $ is the frequency of the alternating current (that is, $%
\omega $ is a positive real number), we will allow $\lambda 
$ to take arbitrary values in $\mathbb{C}\setminus \left\{ 0\right\} $ (cf. 
\cite{Brune}, \cite{Muranova3}, \cite{Muranova1}).

In fact, it will be more convenient to work with the \emph{admittance} $\rho
_{xy}^{(\lambda )}$:

\begin{equation}
\rho _{xy}^{(\lambda )}:=\frac{1}{z_{xy}^{(\lambda )}}=\frac{\lambda }{%
L_{xy}\lambda ^{2}+R_{xy}\lambda +{D_{xy}}}.  \label{rhol}
\end{equation}

Define the \emph{physical Laplacian} $\Delta _{\rho }$ as an operator on
functions $f:V\rightarrow \mathbb{C}$ as follows 
\begin{equation}
\Delta _{\rho }f(x)=\sum_{y\in V:y\sim x}(f(y)-f(x))\rho _{xy}^{(\lambda )},
\label{PhysLaplacian}
\end{equation}%
where $x\sim y$ means that $xy\in E$. For convenience let us extend $\rho
_{xy}^{\left( \lambda \right) }$ to all pairs $x,y\in V$ by setting $\rho
_{xy}^{\left( \lambda \right) }\equiv 0$ if $x\not\sim y.$ Then the summation in (%
\ref{PhysLaplacian}) can be extended to all $y\in V.$

Let us fix a vertex $a_{0}\in V$ and a non-empty subset $B\in V$ such that $%
a_{0}\not\in B$. Set $B_{0}=B\cup \{a_{0}\}$. The physical meaning of $a_{0}$ and $B$ is as follows: at the point $a_{0}
$ we keep a potential $1$, while the set $B$ represents
a ground. We refer to the structure $\Gamma =(V,\rho ,a_{0},B)$ as an \emph{(electrical) network}.

On any finite network we can consider the following discrete boundary value Dirichlet problem
\begin{equation}
\begin{cases}
\Delta _{\rho }v^{(\lambda)}(x)=0\text{\ \ for }x\in V\setminus B_{0}, \\ 
v^{(\lambda)}(x)=0\text{\ \ for }x\in B, \\ 
v^{(\lambda)}(a_{0})=1,%
\end{cases}
\label{Dirpr}
\end{equation}%
where $v^{(\lambda)}:V\rightarrow \Bbb C$ is an unknown function of voltage. This system follows from Ohm's and Kirchhoff's complex laws.
The physical voltage at a vertex $x$ at time $t$ is then $\Re v^{(i\omega)}(x)e^{i\omega t}$ for $\lambda=i\omega$, where $\omega>0$ is a frequency of an alternating voltage (see e.g. \cite{Feynman1}, \cite{Hughes}).

Let us define for any network $\Gamma$ the set 
\begin{equation*}
\Lambda=\{\lambda\in \Bbb C\setminus\{0\}\mbox{ such that } z_{xy}^{(\lambda)}\ne 0\mbox{ for any edge of }\Gamma\}.
\end{equation*}

Further we will consider the Dirichlet problem  (\ref{Dirpr}) on a given network exclusively for $\lambda\in \Lambda$. 

If $v^{(\lambda)}\left( x\right) $ is a solution of (\ref{Dirpr}) then the total current
through $a_{0}$ is equal to 
\begin{equation*}
\sum_{x\in V}(1-v^{(\lambda)}(x))\rho _{xa_{0}}^{(\lambda )},
\end{equation*}%
which motivates the following definition (cf. \cite{Muranova3}).

\begin{definition}
For any $\lambda \in \Lambda $, the \emph{effective admittance} of the
network $\Gamma $ is defined by 
\begin{equation}
\mathcal{P}{(\lambda )}=\sum_{x\in V}(1-v^{(\lambda)}(x))\rho _{xa_{0}}^{(\lambda )},
\label{Pdef}
\end{equation}%
where $v$ is a solution of the Dirichlet problem (\ref{Dirpr}). The \emph{%
effective impedance} of $\Gamma $ is defined by 
\begin{equation*}
Z{(\lambda )}=\frac{1}{\mathcal{P}\left( \lambda \right) }=\frac{1}{%
\sum_{x\in V}(1-v^{(\lambda)}(x))\rho _{xa_{0}}^{(\lambda )}}.
\end{equation*}%
If the Dirichlet problem (\ref{Dirpr}) has no solution for some $\lambda $,
then we set $\mathcal{P}{(\lambda )}=\infty $ and $Z{(\lambda )}=0$.
\end{definition}

Note that $Z{(\lambda )}$ and $\mathcal{P}{(\lambda )}$ take values in $%
\overline{\mathbb{C}}=\mathbb{C}\cup \{\infty \}$. In \cite{Muranova3} it is shown that in the case when (\ref{Dirpr}) has multiple solution, the values of $Z{%
(\lambda )}$ and $\mathcal{P}{(\lambda )}$ are independent of the choice of
the solution $v$. 

The defined effective impedance is a natural generalization  of effective resistance for 
weighted graphs (see e.g. \cite{DS}, \cite{LPW}, \cite{Soardi}).

The approach to an effective impedance for infinite networks, used in the following, is described in \cite{Muranova3}. Moreover, it is a generalization of the classical approach to infinite weighted graphs (see e.g. \cite{Barlow}, \cite{Grimmett}, \cite{Soardi}, \cite{Woess}). The idea is to consider finite network approximations.

Let $\Gamma=(V, \rho, a_0, B)$ be an infinite network.
Let us consider the sequence of finite complex-weighted graphs 
$(V_n, \left.\rho\right|_{V_n})$, where $V_n=\{x\in V\mid\dist (a_0,x)\le n\}$, $n\in\Bbb N$. We denote by
\begin{equation*}
\partial V_n=\{x\in V\mid\dist (a_0,x)= n\}
\end{equation*}
 the \emph{boundary} of the graph $(V_n,\left.\rho\right|_{V_n})$. 
Note that $V_{n+1}= \partial V_{n+1}\cup V_n$. Let us denote $B_n=B\cap V_n$. 

Then 
\begin{equation*}
\Gamma_n=(V_n,\left.\rho\right|_{V_n},a_0,B_n\cup \partial V_n),n\in\Bbb N
\end{equation*}
is a \emph{sequence of finite networks exhausted the infinite network $\Gamma$}.

Let $\P_n(\lambda)$ be the effective impedance of $\Gamma_n$. 
\begin{definition}
Define the \emph{effective admittance} of $\Gamma$ as
\begin{equation*}
\P(\lambda)=\lim_{n\rightarrow\infty}\P_n(\lambda)
\end{equation*}
for those $\lambda \in \Bbb C\setminus \{0\}$ where the limit exists.
\end{definition}
It is reasonable to define the effective impedance of infinite network as
 $Z(\lambda)=1/\P(\lambda)$. It takes values in $\overline {\Bbb C}$.

Let us point out that in this model is assumed that any network has potential zero at the infinity. Therefore an approach to the ladder network here is not completely the same as in \cite{Yoon}, but it will give the same result. In \cite{Yoon} the ladder network is considered more like a fractal (compare to \cite{AlonsoRuiz}, \cite{ChenTeplyaev}).

\section{Finite ladder network}
\label{section3}
In this section we calculate the effective admittance of \emph{any} finite ladder network. The calculation follows the same outline as in \cite{Muranova3}, but we present them here for completeness. 

Consider the finite graph $(V,E)$, where 
\begin{equation*}
V=\{0,1,2,3,4,\dots (2n-3), (2n-2)\}\cup\{2n\}
\end{equation*}%
and $E$ is given by $(2k-2)\sim 2k$, $k=\overline{1,n}$  and $(2k-1)\sim 2k$ for $k=\overline{1,(n-1)}$ . Let us consider a network as on Figure \ref{finiteLadderab}. 

\begin{figure}[H]
\centering
\begin{circuitikz} 
  \draw
 (0,0) node[anchor=south]{$0$}
  (0,0) to[short, l=$\alpha$, o-*] (2,0)
  (2,0) node[anchor=south]{$2$}
  (2,0) to[short,  l=$\alpha$, *-*] (4,0)
  (4,0) node[anchor=south]{$4$}
 (6,0) node[anchor=south]{$2k$}
  (8,0) node[anchor=south]{$(2n-2)$}
  (8,0) to[short,  l=$\alpha$, *-o] (10,0)
  (10,0) node[anchor=south]{$2n$}

  (2,0) to[short,  l=$\beta$, *-o] (2,-2)
  (2,-2) node[anchor=north]{$1$}
  (4,0) to[short,  l=$\beta$, *-o] (4,-2)
  (4,-2) node[anchor=north]{$3$}
  (6,0) to[short,  l=$\beta$, *-o] (6,-2)
  (6,-2) node[anchor=north]{$(2k-1)$}
  (8,0) to[short,  l=$\beta$, *-o] (8,-2)
  (8,-2) node[anchor=north]{$(2n-3)$};

  \draw[dashed] 
  (4,0) to[short, *-*] (6,0) 
  (6,0) to[short, *-] (8,0);
\end{circuitikz}
\caption{Finite ladder network}
\label{finiteLadderab}
\end{figure}
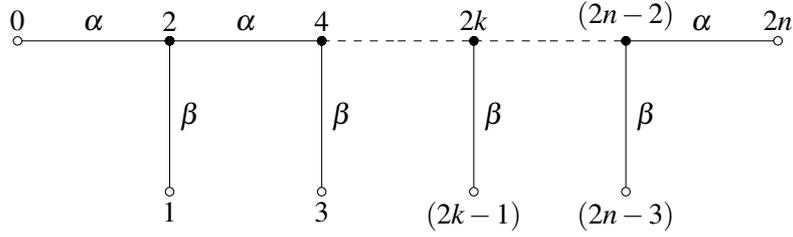

That is, let admittances of the edges $(2k-2)\sim 2k$ be $\alpha=\alpha^{(\lambda)}$ and admittances
of the edges $2k-1\sim 2k$ be $\beta^{(\lambda)}$. Set also $a_0=0$, while 
\begin{equation*}
B=\{1,3,\dots, 2n-3\}\cup\{2n\}.
\end{equation*}%
We will refer to such a network as a \emph{finite $\alpha\beta$-network} and denote it by $\Gamma_n^{\alpha\beta}$. 
The Dirichlet problem (\ref{Dirpr}) for this network is as follows:
\begin{equation}\label{dirprFcomplex}
 \begin{dcases}
 v(2k-2)+\mu v(2k-1)+ v(2k+2)-(2+\mu)v(2k)=0,\ \ k=\overline{1,n-1},
\\
v(0)=1,
\\
v(2k-1)= 0,\ \ k=\overline{1,n-1}, 
\\ 
v(2n)=0,
 \end{dcases}
\end{equation}
where $\mu=\dfrac{\beta}{\alpha}$.

Substituting the equations from the third line of (\ref{dirprFcomplex}) to the first line and denoting $v_k=v(2k)$, we obtain the following recurrence relation for $v_k$:
\begin{equation}\label{recukcomplex}
v_{k+1}-\left(2+\mu\right)v_k+v_{k-1}=0.
\end{equation}
The characteristic polynomial of (\ref{recukcomplex}) is
\begin{equation}\label{psicomplex}
\psi^2-\left(2+\mu\right)\psi+1=0.
\end{equation}
By the definition of a network $\mu\ne 0$ (i.e. $\mu^{(\lambda)}\ne 0$ for $\lambda\in \Lambda$). If $\mu\ne -4$, then the equation (\ref{psicomplex}) has two different complex roots $\psi_1, \psi_2$ and its solution is
\begin{equation}\label{ukeqcomplex}
v_k=c_1\psi_1^k+c_2\psi_2^k,
\end{equation}
where $c_1,c_2\in \mathbb{C}$ are arbitrary constants.

We use the second and fourth equations of (\ref{dirprFcomplex}) as boundary conditions for this recurrence equation.
Substituting (\ref{ukeqcomplex}) in the boundary conditions we obtain the following equations for the constants:
\begin{equation*}
\begin{dcases}
c_1+c_2=1,\\
c_1\psi_1^{n}+c_2\psi_2^{n}=0.
\end{dcases}
\end{equation*}
Therefore,
\begin{equation*}
\begin{dcases}
c_1=\dfrac{1}{1-\psi_1^{2n}}=\dfrac{-\psi_2^{2n}}{1-\psi_2^{2n}},\\
c_2=\dfrac{1}{1-\psi_2^{2n}}=\dfrac{-\psi_1^{2n}}{1-\psi_1^{2n}},
\end{dcases}
\end{equation*}
since $\psi_1\psi_2=1$ by (\ref{psicomplex}).

Now we can calculate the effective admittance of $\Gamma_n^{\alpha\beta}$ in the case $\mu\ne -4$:
\begin{equation}\label{PabNetworkNeminus4complex}
\begin{split}
\mathcal{P}^{\alpha\beta}_{n}=&{\alpha\left(1-v(2)\right)}={\alpha\left(1-v_1\right)}=\alpha\left(1-c_1 \psi_1-c_2 \psi_2\right)\\
=&\dfrac{\alpha\left(\psi_1^{2n-1}+1\right)\left(\psi_1-1\right)}{\left(\psi_1^{2n}-1\right)}=\dfrac{\alpha\left(\psi_2^{2n-1}+1\right)\left(\psi_2-1\right)}{\left(\psi_2^{2n}-1\right)}.
\end{split}
\end{equation}

By definition, an effective admittance $\mathcal P_n^{\alpha\beta}$ is a rational function of $\alpha$ and $\beta$. Indeed, using binomial expansion, it can be written as a rational function of $\alpha$ and $\beta$, without usage of $\psi_1$, $\psi_2$:
\begin{align*}
\mathcal P_n^{\alpha\beta}=&\alpha\left(1-c_1 \psi_1-c_2 \psi_2\right)=\alpha\left(1-\dfrac{\psi_1}{1-\psi_1^{2n}}-\dfrac{\psi_2}{1-\psi_2^{2n}}\right)\\
=&\alpha\left(1-\dfrac{\psi_1\left(1-\psi_2^{2n}\right)+\psi_2\left(1-\psi_1^{2n}\right)}{\left(1-\psi_1^{2n}\right)\left(1-\psi_2^{2n}\right)}\right)\\
=&\alpha\left(1-\dfrac{\psi_1+\psi_2-\left(\psi_2^{2n-1}+\psi_1^{2n-1}\right)}{2-\left(\psi_1^{2n}+\psi_2^{2n}\right)}\right)\\
=&\alpha\left(1-\dfrac{2+\dfrac{\beta}{\alpha}-2\sum\limits_{k=0}^{n-1}{{2n-1}\choose{2k}}\left(1+\dfrac{\beta}{2\alpha}\right)^{2n-2k-1}\left(\dfrac{\beta}{\alpha}+\left(\dfrac{\beta}{2\alpha}\right)^2\right)^{k}}{2-2\sum\limits_{k=0}^{n}{{2n}\choose{2k}}\left(1+\dfrac{\beta}{2\alpha}\right)^{2n-2k}\left(\dfrac{\beta}{\alpha}+\left(\dfrac{\beta}{2\alpha}\right)^2\right)^{k}}\right),
\end{align*}
since $\psi_1+\psi_2=2+\mu=2+\dfrac{\beta}{\alpha}$ by \eqref{psicomplex}.

Let us now consider the case $\mu =-4$. Then the solution of the recurrence relation (\ref{recukcomplex}) is 
\begin{equation*}
v_k=c_1(-1)^k+c_2k(-1)^k,
\end{equation*}
where $c_1,c_2\in \mathbb{C}$ are arbitrary constants. And using boundary conditions for the recurrence relation, we obtain
\begin{equation*}
\begin{dcases}
c_1=1\\
c_2=-\dfrac{1}{n}.
\end{dcases}
\end{equation*}

Then the effective admittance is
\begin{equation*}
\mathcal{P}_{n}={\alpha\left(1-v(2)\right)}={\alpha\left(1-v_1\right)}={\alpha\left(1-\left(-1+\dfrac{1}{n}\right)\right)}=\dfrac{\alpha(2n-1)}{ n}.
\end{equation*}

Therefore, for a finite $\alpha\beta$-network we have for any $\lambda\in\Lambda$
\begin{equation}\label{PabNetworkcomplex}
\mathcal{P}_{n}=
\begin{dcases}
\dfrac{\alpha(2n-1)}{ n}\mbox{, if }\dfrac{\beta}{\alpha}= -4,\\
\P_n^{\alpha\beta}, \mbox{otherwise}.
\end{dcases}
\end{equation}

\section{Feynman's ladder ($LC$-network) with zero at infinity}
\label{section4}
In this section we calculate the effective admittance of an infinite $LC-$network, using its finite approximations and their admittances, calculated in the previous section.
Consider the infinite ladder network $(V,E)$, where 
\begin{equation*}
V=\{0,1,2,3,4,\dots \}
\end{equation*}%
and $E$ is given by $(2k-2)\sim 2k$ and $(2k-1)\sim 2k$ for $k=\overline{%
1,\infty }$  (see Figure \ref{figFLadder})

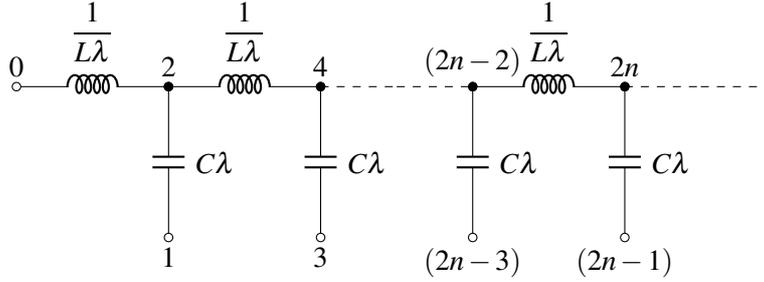
\begin{figure}[H]
\centering
\begin{circuitikz} 
  \draw
 (0,0) node[anchor=south]{$0$}
  (0,0) to[/tikz/circuitikz/bipoles/length=30pt,  L, o-*] (2,0)
  (1,0.2) node[anchor=south]{$\dfrac{1}{L\lambda}$}

  (2,0) node[anchor=south]{$2$}
  (2,0) to[/tikz/circuitikz/bipoles/length=30pt, L, *-*] (4,0)
  (3,0.2) node[anchor=south]{$\dfrac{1}{L\lambda}$}
  (4,0) node[anchor=south]{$4$}

  (2,0) to[/tikz/circuitikz/bipoles/length=20pt,C,  l=$C\lambda$,-o] (2,-2)
  (2,-2) node[anchor=north]{$1$}

  (4,0) to[/tikz/circuitikz/bipoles/length=20pt,C,  l=$C\lambda$,-o] (4,-2)
  (4,-2) node[anchor=north]{$3$}

  (6,0) node[anchor=south]{$(2n-2)$}
  (6,0) to[/tikz/circuitikz/bipoles/length=30pt, L, *-*] (8,0)
  (7,0.2) node[anchor=south]{$\dfrac{1}{L\lambda}$}
  (8,0) node[anchor=south]{$2n$}

  (6,0) to[/tikz/circuitikz/bipoles/length=20pt,C,  l=$C\lambda$,-o] (6,-2)
  (6,-2) node[anchor=north]{$(2n-3)$}
  (8,0) to[/tikz/circuitikz/bipoles/length=20pt,C,  l=$C\lambda$,-o] (8,-2)
  (8,-2) node[anchor=north]{$(2n-1)$};

  \draw[dashed] 
  (4,0) to[short, *-*] (6,0) 
  (8,0) to[short, *-] (10,0);
\end{circuitikz}
\caption{Feynman's ladder with zero at infinity}
\label{figFLadder}
\end{figure}

Let the admittance of the edges $(2k-2)\sim 2k$ be $\dfrac{1}{L \lambda} $ and admittance
of the edges $2k-1\sim 2k$ be $C \lambda$, where $L>0$
and $C>0$. Set also $a_{0}=0$, while $B=\{1,3,\dots \}$.

This network is the Feynman's ladder network (see \cite{Feynman2}) and we assume that it has a ground at infinity. 

In this section we analyse the behaviour of the sequence of the effective admittances for the exhausted networks in the whole complex plane $\lambda$, and compare our calculations with the result stated by Richard Feynman in \cite{Feynman2} and Sung Hyun Yoon in \cite{Yoon} for $\lambda=i\omega$, $\omega>0$.

This network can be exhausted by finite $\alpha\beta$-networks (with $\alpha=\dfrac{1}{L \lambda}$, $\beta={C \lambda}$), whose effective admittances by \eqref{PabNetworkNeminus4complex} and \eqref{PabNetworkcomplex} are
\begin{equation*}
\P_n(\lambda)=
\begin{dcases}
\dfrac{2n-1}{L \lambda n},\mbox{ if } \mu= -4,\\
\mbox{not defined, if } \lambda=0,\\
\P_n^{\alpha\beta}(\lambda)=\dfrac{\left(\psi_1^{2n-1}+1\right)\left(\psi_1-1\right)}{L \lambda\left(\psi_1^{2n}-1\right)}, \mbox{otherwise},
\end{dcases}
\end{equation*}
where $\mu=LC\lambda^2$ and $\psi_1$ is any root of the equation
\begin{equation}\label{quadraticEqFeynman}
\psi^2-\left(2+{LC\lambda^2}\right)\psi+1=0.
\end{equation}

Let us analyse the sequence $\{\P_n^{\alpha\beta}(\lambda)\}$, $n\rightarrow\infty$.

Since $\psi_1$ is any root of the quadratic equation \eqref{quadraticEqFeynman}, we can assume, without loss of generality, that $|\psi_1|\le1\le|\psi_2|$. Note that  then $\{\P_n^{\alpha\beta}(\lambda)\}$ has limit if and only if $|\psi_1|<1$. This limit is equal to 
$(1-\psi_1)/(L \lambda)$. Now we will reformulate the condition on existence of the limit in terms of $\lambda$. Firstly, we will prove the following claim.

\begin{claim} \label{modulus_psi_1}
Let $\lambda\in\Bbb C\setminus\{0\}$, $\lambda^2\ne -\dfrac{4}{LC}$. Then the condition $|\psi_{1}|=|\psi_{2}|=1$ occurs if and only if $\lambda^2 \in \left(-\dfrac{4}{LC},0\right)$.
\end{claim}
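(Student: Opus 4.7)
The plan is to exploit the fact that by Vieta's formulas applied to \eqref{quadraticEqFeynman} the roots satisfy $\psi_1\psi_2 = 1$ and $\psi_1 + \psi_2 = 2 + LC\lambda^2$, and then translate the modulus condition on the roots into a condition on their sum.

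For the forward direction, I would assume $|\psi_1| = |\psi_2| = 1$. First I would observe that the two excluded cases force $\psi_1 \ne \psi_2$: if $\psi_1 = \psi_2$, then $\psi_1^2 = 1$ forces $\psi_1 = \pm 1$, which gives $2 + LC\lambda^2 = \pm 2$, i.e.\ either $\lambda = 0$ or $\lambda^2 = -4/(LC)$, both excluded. With $\psi_1 \ne \psi_2$ and $|\psi_1| = 1$, the relation $\psi_1\psi_2 = 1$ gives $\psi_2 = 1/\psi_1 = \overline{\psi_1}$, so the two roots are distinct complex conjugates lying on the unit circle. Hence $\psi_1 + \psi_2 = 2\Re \psi_1$ is real and, since the roots are non-real, strictly inside $(-2,2)$. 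From $2 + LC\lambda^2 \in (-2,2)$ and $LC > 0$ real one concludes $\lambda^2 \in (-4/(LC),\,0)$.

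For the converse, I would assume $\lambda^2 \in (-4/(LC),\,0)$, so $\mu := LC\lambda^2 \in (-4,0)$ is real. The discriminant of \eqref{quadraticEqFeynman} is $(2+\mu)^2 - 4 = \mu(\mu+4) < 0$, hence $\psi_1, \psi_2$ are (non-real) complex conjugates. Combined with $\psi_1\psi_2 = 1$ this yields $|\psi_1|^2 = \psi_1\overline{\psi_1} = \psi_1\psi_2 = 1$, so $|\psi_1| = |\psi_2| = 1$.

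There is no real obstacle here; the only subtlety is making sure that the two excluded values ($\lambda = 0$ and $\lambda^2 = -4/(LC)$) are what rule out the degenerate boundary situations $\psi_1 = \psi_2 = \pm 1$, so that the open interval in the statement is exactly the right one.
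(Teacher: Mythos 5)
Your proof is correct and follows essentially the same route as the paper: Vieta's relations $\psi_1\psi_2=1$, $\psi_1+\psi_2=2+LC\lambda^2$, the observation that unit-modulus roots must be non-real conjugates (the excluded values $\lambda=0$ and $\lambda^2=-4/(LC)$ ruling out $\psi=\pm1$), and the sign of the discriminant for the converse. The only cosmetic differences are that you exclude the degenerate case via the double root $\psi_1=\psi_2$ rather than by substituting $\psi=\pm1$ into the quadratic, and you read off $2+LC\lambda^2\in(-2,2)$ directly from $2\Re\psi_1$ instead of invoking the discriminant in the forward direction.
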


\begin{proof}
\textquotedblleft $\Rightarrow $\textquotedblright\ Let $|\psi_{1}|=1$. Then $|\psi_{2}|=1$, since $\psi_1\psi_2=1$ (see (\ref{quadraticEqFeynman})). Further, since $LC \lambda^2 \neq 0$
and $LC \lambda^2 \neq -4$, it follows from (\ref{quadraticEqFeynman}) and $|\psi_{1}|=|\psi_{2}|=1$ that $\psi_{1},\psi_{2}\not\in \mathbb{R}$ (since $\psi_{1},\psi_{2}\ne\pm 1$). And, using $\psi_{1}\overline{\psi_{1}}=|\psi_1|^2=1$, we have $\psi_{2}=\overline{\psi_{1}}$. Moreover, by (\ref{quadraticEqFeynman}) we conclude that $2+LC \lambda^2 =\psi_{1}+\psi_{2}=2\Re \psi_1\in \mathbb{R}$, i.e. $LC \lambda^2 \in \Bbb R$. Then $
\psi_{1},\psi_{2}\not\in \mathbb{R}$ means that the determinant of (\ref{quadraticEqFeynman}) 
\begin{equation*}
\left( 2+LC \lambda^2 \right) ^{2}-4=4LC \lambda^2+(LC \lambda^2) ^{2}
\end{equation*}
is negative, i.e. $LC \lambda^2 \in (-4, 0)$.
Therefore, 
\begin{equation*}
\lambda^2 \in \left(-\dfrac{4}{LC},0\right)
\end{equation*}
which was to be proved.

\textquotedblleft $\Leftarrow $\textquotedblright\ Let $\lambda^2 \in \left(-\dfrac{4}{LC},0\right)$.
Then the determinant of (\ref{quadraticEqFeynman}) is negative and 
\begin{align*}
|\psi_{1,2}|^{2}=&\left\vert 1+\dfrac{LC \lambda^2 }{2}\pm i\sqrt{-LC \lambda^2 -\left( \dfrac{
LC \lambda^2 }{2}\right) ^{2}}\right\vert ^{2}\\
=&\left( 1+\dfrac{LC \lambda^2 }{2}\right)
^{2}-LC \lambda^2 -\left( \dfrac{LC \lambda^2 }{2}\right) ^{2}=1.
\end{align*}
\end{proof}

Therefore, for any $\lambda\in\Bbb C\setminus\left[-i\sqrt{\dfrac{4}{LC}},i\sqrt{\dfrac{4}{LC}}\right]$ we can write 
\begin{equation}\label{psixi}
\psi_1(\lambda)=1+\dfrac{LC\lambda^2}{2}+\lambda\xi(\lambda),
\end{equation}
where $\xi(\lambda)$ is the square root of $LC+\dfrac{L^2C^2\lambda^2}{4}$, such that $|\psi_1(\lambda)|< 1$. 

Let us denote $\gamma=LC+\dfrac{L^2C^2\lambda^2}{4}$. Let us consider $\gamma\in \Bbb C\setminus(-i\infty,0)$, i.e. $\gamma=re^{i\phi}$, $\phi\in \left(-\dfrac{\pi}{2},\dfrac{3\pi}{2}\right)$ in polar coordinates. Then there are exactly two continuous functions $\xi_1, \xi_2$, which give square roots of $\gamma$: 
\begin{equation*}
\xi_1(\gamma)=\sqrt {r} e^{i\dfrac{\phi}{2}},\xi_2(\gamma)=-\xi_1(\gamma)
\end{equation*}
(see Figure \ref{figxi1xi2}).

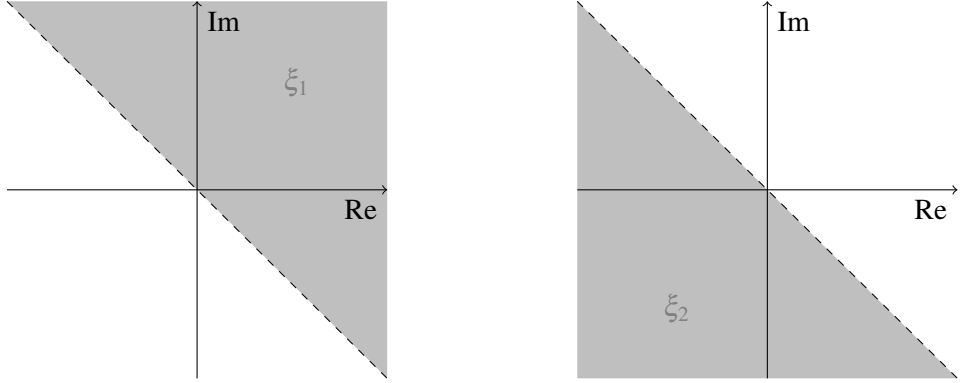
\begin{figure}[H]
\centering
\begin{tikzpicture}[scale=0.5]

   \path [draw=none,fill=gray,semitransparent] (-5,5) -- (5,5) -- (5,-5);

    \draw [->] (-5,0) -- (5,0) node [below left]  {$\Re $};
    \draw [->] (0,-5) -- (0,5) node [below right] {$\Im $};
    \draw [dashed] (-5,5) -- (5,-5);

    \node [below right,gray] at (+2,+3.5) {$\xi_1$};

   \path [draw=none,fill=gray,semitransparent] (10,5) -- (10,-5) -- (20,-5);

    \draw [->] (10,0) -- (20,0) node [below left]  {$\Re$};
    \draw [->] (15,-5) -- (15,5) node [below right] {$\Im$};
    \draw [dashed] (10,5) -- (20,-5);

    \node [below right,gray] at (12,-2.5) {$\xi_2$};
\end{tikzpicture}
\caption{The images of $\xi_1$ and $\xi_2$}
\label{figxi1xi2}
\end{figure}
 
Since 
\begin{equation*}
\begin{split}
\gamma=&LC+\dfrac{L^2C^2\lambda^2}{4}=LC+\dfrac{L^2C^2\left(\Re \lambda+i \Im \lambda\right)^2}{4}\\
=&LC+\dfrac{L^2C^2\left(\Re \lambda\right)^2-L^2C^2 \left(\Im \lambda\right)^2}{4}+i\dfrac{L^2C^2(\Re\lambda)(\Im\lambda)}{2},
\end{split}
\end{equation*}

$\xi_1(\gamma(\lambda))$ and $\xi_2(\gamma(\lambda))$ are defined for all $\lambda\in \Bbb C\setminus \overline{\Lambda}$, where
\begin{equation*}
\overline{\Lambda}=\left\{1+\dfrac{LC\left(\Re \lambda\right)^2-LC \left(\Im \lambda\right)^2}{4}= 0 \mbox{ and } \left(\Re\lambda\right)\left(\Im\lambda\right)<0\right\},
\end{equation*}
see Figure \ref{figxilambdanotdefined}.

\begin{figure}[H]
\centering
\begin{tikzpicture}[scale=0.7]

   \path [draw=none,fill=gray,semitransparent] (-5,-5) rectangle (5,5) ;

    \draw [->] (-5,0) -- (5,0) node [below left]  {$\Re \lambda$};
    \draw [->] (0,-5) -- (0,5) node [below right] {$\Im \lambda$};
   \draw[white, line width = 0.60mm]  plot[smooth,domain=-4.85:0] (\x, {sqrt((\x)^2+4)}); 
    \draw[white, line width = 0.60mm]  plot[smooth,domain=0:4.8] (\x, {-sqrt((\x)^2+4)});
    \draw[dashed, line width = 0.30mm]  plot[smooth,domain=-4.6:0] (\x, {sqrt((\x)^2+4)});
    \draw[dashed, line width = 0.30mm]  plot[smooth,domain=0:4.55] (\x, {-sqrt((\x)^2+4)});

   \node [right,black] at (0,+2) {$i\sqrt{\dfrac{4}{LC}}$};
   \node [left,black] at (0,-2) {$-i\sqrt{\dfrac{4}{LC}}$};

\end{tikzpicture}
\caption{The domain of $\xi_{1,2}(\gamma(\lambda))$}
\label{figxilambdanotdefined}
\end{figure}

Since the functions
\begin{equation*}
\left|1+\dfrac{LC\lambda^2}{2}+\lambda\xi_1(\gamma(\lambda))\right| \mbox { and }\left|1+\dfrac{LC\lambda^2}{2}+\lambda\xi_2(\gamma(\lambda))\right|
\end{equation*}
are continuous on $\lambda$, the choice of the function $\xi_1(\lambda)$ or $\xi_2(\lambda)$ in \eqref{psixi}, can not change inside the domains 
\begin{equation*}
\Omega_1=\left\{\Re^2 \lambda-\Im^2 \lambda>-\sqrt{\dfrac{4}{LC}}, \Re \lambda>0 \right\}\cup\left\{\Re^2 \lambda-\Im^2 \lambda<-\sqrt{\dfrac{4}{LC}}, \Im \lambda>0 \right\}
\end{equation*}
and
\begin{equation*}
\Omega_2=\left\{\Re^2 \lambda-\Im^2 \lambda>-\sqrt{\dfrac{4}{LC}}, \Re \lambda<0 \right\}\cup\left\{\Re^2 \lambda-\Im^2 \lambda<-\sqrt{\dfrac{4}{LC}}, \Im \lambda<0 \right\}
\end{equation*}
(see Figure \ref{figxilambdanotdefined2}).

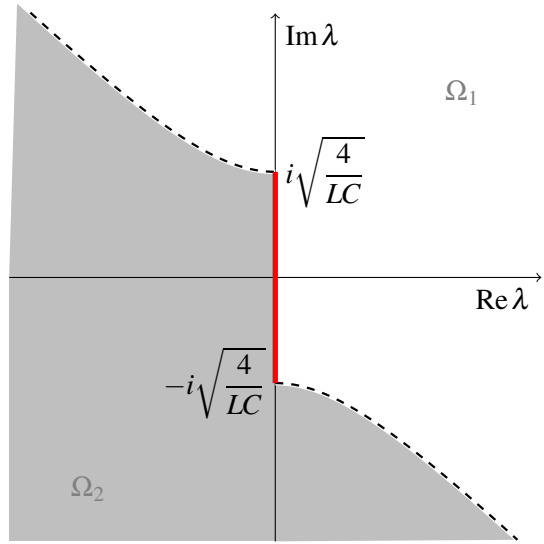
\begin{figure}[H]
\centering
\begin{tikzpicture}[scale=0.7]

   \path [draw=none,fill=gray,semitransparent]
   (-5,0) -- plot[ smooth,domain=-4.85:0] (\x, {sqrt((\x)^2+4)})-- (0,0) -- cycle ;
    \path [draw=none,fill=gray,semitransparent] (-5,-5) rectangle (0,0) ;

   \path [draw=none,fill=gray,semitransparent]
  (0,-5) -- plot[ smooth,domain=0:4.55] (\x, {-sqrt((\x)^2+4)}) -- cycle ;

    \draw [->] (-5,0) -- (5,0) node [below left]  {$\Re \lambda$};
    \draw [->] (0,-5) -- (0,5) node [below right] {$\Im \lambda$};
   \draw[white, line width = 0.60mm]  plot[ smooth,domain=-4.9:0] (\x, {sqrt((\x)^2+4)}); 
    \draw[white, line width = 0.60mm]  plot[smooth,domain=0:4.8] (\x, {-sqrt((\x)^2+4)});
    \draw[dashed, line width = 0.30mm]  plot[smooth,domain=-4.6:0] (\x, {sqrt((\x)^2+4)});
    \draw[dashed, line width = 0.30mm]  plot[smooth,domain=0:4.55] (\x, {-sqrt((\x)^2+4)});
    \draw[red, line width = 0.70mm]  (0,-2) -- (0,2); 

   \node [right,black] at (0,+2) {$i\sqrt{\dfrac{4}{LC}}$};
   \node [left,black] at (0,-2) {$-i\sqrt{\dfrac{4}{LC}}$};
   \node [right,gray] at (3,3.5) {$\Omega_1$};
   \node [left,gray] at (-3,-4) {$\Omega_2$};

\end{tikzpicture}
\caption{The domains $\Omega_1$ and $\Omega_2$}
\label{figxilambdanotdefined2}
\end{figure}

Taking $\lambda=2\in \Omega_1$ we have
\begin{equation*}
\left|1+\dfrac{LC\lambda^2}{2}+\lambda\xi_1(\gamma(\lambda))\right|=\left|1+2 LC+2\xi_1({LC+L^2C^2})\right|
=\left|1+2 LC+2\sqrt{LC+L^2C^2}\right|>1.
\end{equation*}
Therefore,
\begin{equation*}
\psi_1(\lambda)=1+\dfrac{LC\lambda^2}{2}+\lambda\xi_2(\gamma(\lambda)), \lambda\in\Omega_1.
\end{equation*}

In the same way, taking $\lambda=-2\in \Omega_2$ we obtain
\begin{equation*}
\left|1+\dfrac{LC\lambda^2}{2}+\lambda\xi_1(\gamma(\lambda))\right|=\left|1+2 LC-2\xi_1({LC+L^2C^2})\right|
=\left|1+2 LC-2\sqrt{LC+L^2C^2}\right|<1.
\end{equation*}
Therefore,
\begin{equation*}
\psi_1(\lambda)=1+\dfrac{LC\lambda^2}{2}+\lambda\xi_1(\gamma(\lambda)), \lambda\in\Omega_2.
\end{equation*}

Therefore, we can calculate the effective admittance of the infinite network for any  $\lambda \in \Bbb C\setminus\left(\overline\Lambda\cup\left[-i\sqrt{\dfrac{4}{LC}},i\sqrt{\dfrac{4}{LC}}\right]\right)$ as
\begin{equation*}
\P(\lambda)=\dfrac{1-\psi_1}{L\lambda}=
\begin{dcases}
-\dfrac{C\lambda}{2}-\dfrac{\xi_2(\gamma(\lambda))}{L}, \lambda\in\Omega_1,\\
-\dfrac{C\lambda}{2}-\dfrac{\xi_1(\gamma(\lambda))}{L}, \lambda\in\Omega_2.\\
\end{dcases}
\end{equation*}

The effective admittance for the $\lambda\in\overline \Lambda$ we can calculate, considering another 
 cut of the plane $\gamma$. Moreover, since the cut $(-i\infty,0)$ has been chosen arbitrary, the limits of the effective admittances from the both sides of the curves $\overline \Lambda$ will coincide and give the required quantity. 

Therefore, we can calculate the effective admittance of the infinite network for any  $\lambda \in \Bbb C\setminus\left[-i\sqrt{\dfrac{4}{LC}},i\sqrt{\dfrac{4}{LC}}\right]$.

A natural question is whether the right and left limits of the effective admittance (or of the $\xi_1(\lambda)$, $\xi_2(\lambda)$) at the segment $\left[-i\sqrt{\dfrac{4}{LC}},i\sqrt{\dfrac{4}{LC}}\right]$ coincide (see Figure \ref{figxi1xi2rightleftlimits}).  The answer is negative.

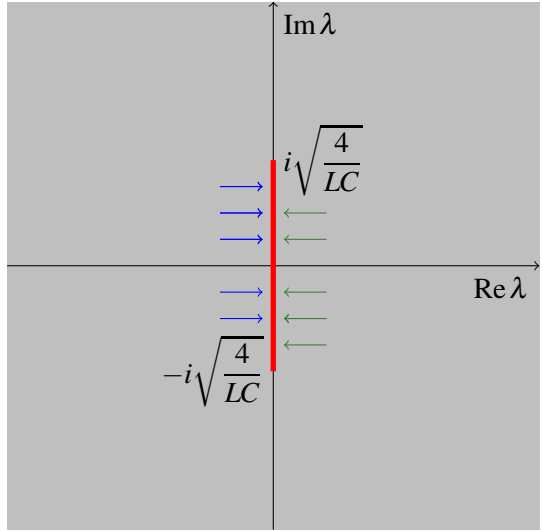
\begin{figure}[H]
\centering
\begin{tikzpicture}[scale=0.7]

   \path [draw=none,fill=gray,semitransparent] (-5,-5) rectangle (5,5);
    \draw [->] (-5,0) -- (5,0) node [below left]  {$\Re \lambda$};
    \draw [->] (0,-5) -- (0,5) node [below right] {$\Im \lambda$};

    \draw[red, line width = 0.70mm]  (0,-2) -- (0,2); 

   \node [right,black] at (0,+2) {$i\sqrt{\dfrac{4}{LC}}$};
   \node [left,black] at (0,-2) {$-i\sqrt{\dfrac{4}{LC}}$};

    \draw [->, blue] (-1,0.5) -- (-0.2,0.5);
    \draw [->, blue] (-1,1) -- (-0.2,1);
    \draw [->, blue] (-1,1.5) -- (-0.2,1.5);
    \draw [->, blue] (-1,-0.5) -- (-0.2,-0.5);
    \draw [->, blue] (-1,-1) -- (-0.2,-1);
    \draw [->, blue] (-1,0.5) -- (-0.2,0.5);
    \draw [->, blue] (-1,1) -- (-0.2,1);
    \draw [->, blue] (-1,1.5) -- (-0.2,1.5);
    \draw [->, OliveGreen] (1,-0.5) -- (0.2,-0.5);
    \draw [->, OliveGreen] (1,-1) -- (0.2,-1);
    \draw [->, OliveGreen] (1,0.5) -- (0.2,0.5);
    \draw [->, OliveGreen] (1,1) -- (0.2,1);
    \draw [->, OliveGreen] (1,-1) -- (0.2,-1);
    \draw [->, OliveGreen] (1,-1.5) -- (0.2,-1.5);
\end{tikzpicture}
\caption{The limits of $\xi_1(\lambda)$ and $\xi_2(\lambda)$}
\label{figxi1xi2rightleftlimits}
\end{figure}

 Indeed, let $\lambda=\epsilon+ i\omega$, $1\gg\epsilon>0$, $\omega\in\left(-\sqrt{\dfrac{4}{LC}},\sqrt{\dfrac{4}{LC}}\right)$, $\lambda\in\Omega_1$. Then
\begin{equation*}
\gamma(\epsilon+i\omega)=LC+\dfrac{L^2C^2}{4}\left(\epsilon^2-\omega^2\right)+ i \dfrac{L^2C^2}{2}\epsilon\omega
\end{equation*}
and 
\begin{equation}\label{limitEffadmF1}
\begin{split}
\lim_{\lambda\rightarrow+i\omega}\P(\lambda)=&\lim_{\epsilon\rightarrow 0}\left(-\dfrac{C(\epsilon+i\omega)}{2}-\dfrac{\xi_2(\gamma(\epsilon+i\omega))}{L}\right)\\
=&  -\dfrac{Ci\omega}{2}+\sqrt{\dfrac{C}{L}-\dfrac{C^2\omega^2}{4}},
\end{split}
\end{equation}
since $\Re \gamma>0$, $|\Im \gamma|\ll 1$ provides $\Re \xi_2<0$. 

Let $\lambda=-\epsilon+ i\omega$, $1\gg\epsilon>0$, $\omega\in\left(-\sqrt{\dfrac{4}{LC}},\sqrt{\dfrac{4}{LC}}\right)$, $\lambda\in\Omega_2$. Then
\begin{equation*}
\gamma(-\epsilon+i\omega)=LC+\dfrac{L^2C^2}{4}\left(\epsilon^2-\omega^2\right)-i \dfrac{L^2C^2}{2}\epsilon\omega
\end{equation*}
and 
\begin{equation*}
\begin{split}
\lim_{\lambda\rightarrow-i\omega}\P(\lambda)=&\lim_{\epsilon\rightarrow 0}\left(-\dfrac{C(-\epsilon+i\omega)}{2}-\dfrac{\xi_1(\gamma(-\epsilon+i\omega))}{L}\right)\\
=&  -\dfrac{Ci\omega}{2}-\sqrt{\dfrac{C}{L}-\dfrac{C^2\omega^2}{4}},
\end{split}
\end{equation*}
since $\Re \gamma>0$, $|\Im \gamma|\ll 1$ provides $\Re \xi_1>0$. 

The limit \eqref{limitEffadmF1} coincides with the one, stated by R. Feynman in \cite{Feynman2} and S.H. Yoon in \cite{Yoon} . Indeed, by \cite[p. 22-13]{Feynman2} we have the effective admittance
\begin{align*}
\P=&\dfrac{1}{Z}=\dfrac{1}{{i\omega L}/{2}+\sqrt{(L/C)-(\omega^2L^2/4)}}=\dfrac{{i\omega L}/{2}-\sqrt{(L/C)-(\omega^2L^2/4)}}{({i\omega L}/{2})^2-((L/C)-(\omega^2L^2/4))}\\
=&-\dfrac{C}{L}\left(\dfrac{i\omega L}{2}-\sqrt{(L/C)-(\omega^2L^2/4)}\right)=-\dfrac{Ci\omega}{2}+\sqrt{\dfrac{C}{L}-\dfrac{C^2\omega^2}{4}}.
\end{align*}

This corresponds to the ideas in \cite{AlonsoRuiz} and in \cite{Yoon} to calculate the effective impedance as right half-plane limit. Physically it makes sense, since the real resistance in any part of a physical network is always greater than zero. Note that we have made a rescaling in a different way than in the above-mentioned papers, although it leads to the same result. Namely, we add a positive real component to $i\omega$, but not to the whole edge. 

\section{$CL$-network with zero at infinity}

In this section we calculate the effective impedance of infinite $CL-$ network, using finite network approximation, described in Section \eqref{section3}.
Consider the infinite ladder network $(V,E)$, where 
\begin{equation*}
V=\{0,1,2,3,4,\dots \}
\end{equation*}%
and $E$ is given by $(2k-2)\sim 2k$ and $(2k-1)\sim 2k$ for $k=\overline{%
1,\infty }$  (see Figure \ref{figCLLadder})

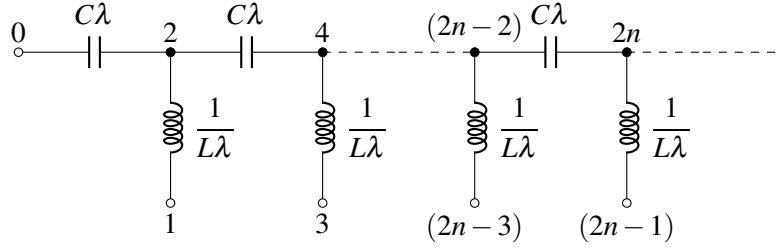
\begin{figure}[H]
\centering
\begin{circuitikz} 
  \draw
 (0,0) node[anchor=south]{$0$}
  (0,0) to[/tikz/circuitikz/bipoles/length=20pt,  C, l=$C\lambda$, o-*] (2,0)

  (2,0) node[anchor=south]{$2$}
  (2,0) to[/tikz/circuitikz/bipoles/length=20pt,  C, l=$C\lambda$, *-*] (4,0)
  (4,0) node[anchor=south]{$4$}

  (2,0) to[/tikz/circuitikz/bipoles/length=30pt,L,  l=$\dfrac{1}{L\lambda}$,-o] (2,-2)
  (2,-2) node[anchor=north]{$1$}

  (4,0) to[/tikz/circuitikz/bipoles/length=30pt,L,  l=$\dfrac{1}{L\lambda}$,-o] (4,-2)
  (4,-2) node[anchor=north]{$3$}

  (6,0) node[anchor=south]{$(2n-2)$}
  (6,0) to[/tikz/circuitikz/bipoles/length=20pt,  C, l=$C\lambda$, *-*] (8,0)
  (8,0) node[anchor=south]{$2n$}

  (6,0) to[/tikz/circuitikz/bipoles/length=30pt,L,  l=$\dfrac{1}{L\lambda}$,-o] (6,-2)
  (6,-2) node[anchor=north]{$(2n-3)$}
  (8,0) to[/tikz/circuitikz/bipoles/length=30pt,L,  l=$\dfrac{1}{L\lambda}$,-o] (8,-2)
  (8,-2) node[anchor=north]{$(2n-1)$};

  \draw[dashed] 
  (4,0) to[short, *-*] (6,0) 
  (8,0) to[short, *-] (10,0);
\end{circuitikz}
\caption{$CL$-ladder with zero at infinity}
\label{figCLLadder}
\end{figure}

Let the admittance of the edges $(2k-2)\sim 2k$ be ${C \lambda} $ and admittance
of the edges $2k-1\sim 2k$ be $\dfrac{1}{L \lambda}$, where $C,L>0$. Set also $a_{0}=0$, while $B=\{1,3,\dots \}$.

This network is a $CL$-ladder network (see \cite{Yoon}). Note that we assume it has 
a ground at infinity. 

We will calculate an effective admittance of the $CL$-ladder in the same way, as we did for Feynman's ladder in Section \ref{section4}, and compare our calculations to the result stated by Sung Hyun Yoon in \cite{Yoon} for $\lambda=i\omega$, $\omega>0$. 

The $CL$-network can be exhausted by finite $\alpha\beta$-networks (with $\alpha=C \lambda$, $\beta=\dfrac{1}{L \lambda}$), whose effective admittances by \eqref{PabNetworkNeminus4complex} and \eqref{PabNetworkcomplex} are
\begin{equation*}
\P_n(\lambda)=
\begin{dcases}
\dfrac{C \lambda(2n-1)}{ n},\mbox{ if }  \lambda=\pm \dfrac{i}{2\sqrt{CL}},\\
\mbox{not defined, if } \lambda=0,\\
\P_n^{\alpha\beta}(\lambda)=\dfrac{C \lambda\left(\psi_1^{2n-1}+1\right)\left(\psi_1-1\right)}{\left(\psi_1^{2n}-1\right)}, \mbox{otherwise},
\end{dcases}
\end{equation*}
where $\psi_1$ is any root of the equation
\begin{equation}\label{quadraticEqCL}
\psi^2-\left(2+\dfrac{1}{CL\lambda^2}\right)\psi+1=0.
\end{equation}
Firstly, let us point out that 
\begin{equation*}
\lim\limits_{n\rightarrow\infty} \P_n\left(\pm \dfrac{i}{2\sqrt{CL}}\right)=\lim\limits_{n\rightarrow\infty}{\pm \dfrac{C i}{2\sqrt{CL}}} \dfrac{(2n-1)}{ n}=\pm {i}\sqrt{\dfrac{C}{L}}.
\end{equation*}

Let us analyse the sequence $\{\P_n^{\alpha\beta}(\lambda)\}$, $n\rightarrow\infty$.

Since $\psi_1$ is any root of the quadratic equation \eqref{quadraticEqCL}, we can assume, without loss of generality, that $|\psi_1|\le1\le|\psi_2|$. Note that  then $\{\P_n^{\alpha\beta}(\lambda)\}$ has limit if and only if $|\psi_1|<1$. This limit is equal to 
${C \lambda}{(1-\psi_1)}$.
\begin{claim} \label{modulus_psi_1CL}
Let $\lambda\in\Bbb C\setminus\{0\}$, $\lambda^2\ne -\dfrac{1}{4CL}$. Then the condition $|\psi_{1}|=|\psi_{2}|=1$ occurs if and only if $%
\lambda^2 \in \left(-\infty,-\dfrac{1}{4CL}\right)$.
\end{claim}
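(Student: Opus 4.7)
The plan is to mirror the proof of Claim \ref{modulus_psi_1} almost verbatim, replacing the parameter $LC\lambda^2$ by $\mu := 1/(CL\lambda^2)$. Indeed, equation \eqref{quadraticEqCL} has the same shape $\psi^2 - (2+\mu)\psi + 1 = 0$ as \eqref{quadraticEqFeynman}, so an identical argument shows that, provided $\mu \ne 0$ and $\mu \ne -4$, the condition $|\psi_1| = |\psi_2| = 1$ is equivalent to $\mu \in (-4,0)$. All that then remains is to translate this condition on $\mu$ back into a condition on $\lambda^2$.

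More concretely, for the forward direction I will assume $|\psi_1| = 1$ and use Vieta's relation $\psi_1\psi_2 = 1$ to get $|\psi_2| = 1$. Since $\mu \ne 0, -4$, the roots cannot be $\pm 1$, so from $\psi_1 \overline{\psi_1} = 1$ I conclude $\psi_2 = \overline{\psi_1}$, whence $2+\mu = \psi_1 + \psi_2 = 2\Re\psi_1 \in \mathbb{R}$, i.e.\ $\mu \in \mathbb{R}$. The fact that the roots are non-real then forces the discriminant $(2+\mu)^2 - 4 = \mu(\mu+4)$ to be strictly negative, giving $\mu \in (-4, 0)$. For the reverse direction, when $\mu \in (-4, 0)$ I just compute directly
\begin{equation*}
|\psi_{1,2}|^2 = \left|1+\tfrac{\mu}{2} \pm i\sqrt{-\mu - \tfrac{\mu^2}{4}}\right|^2 = \left(1+\tfrac{\mu}{2}\right)^2 - \mu - \tfrac{\mu^2}{4} = 1.
\end{equation*}

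The final step is translating $\mu = 1/(CL\lambda^2) \in (-4, 0)$ into the claimed condition on $\lambda^2$. The condition $\mu < 0$ forces $CL\lambda^2 < 0$, and given this, $\mu > -4$ becomes (after multiplying by the negative quantity $CL\lambda^2$ and flipping the inequality) $CL\lambda^2 < -1/4$. Together these give $CL\lambda^2 \in (-\infty, -1/4)$, and since $C, L > 0$ this is equivalent to $\lambda^2 \in (-\infty, -1/(4CL))$, as required.

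I do not expect any real obstacle here; the entire content of the claim is already inside the proof of Claim \ref{modulus_psi_1}, and the only thing I need to be careful about is the direction-reversal when rearranging the inequality $1/(CL\lambda^2) > -4$, which depends on the sign of $CL\lambda^2$ that we have just established.
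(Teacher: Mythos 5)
Your proposal is correct and is exactly the route the paper takes: the paper's proof of this claim consists solely of the remark that it ``follows the same outline as the proof of Claim \ref{modulus_psi_1}'', and you have simply carried out that outline with $\mu = 1/(CL\lambda^2)$ in place of $LC\lambda^2$. Your explicit handling of the sign-reversal when converting $1/(CL\lambda^2) \in (-4,0)$ into $\lambda^2 \in \left(-\infty, -\tfrac{1}{4CL}\right)$ is the one detail the paper leaves implicit, and you have done it correctly.
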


\begin{proof}
The proof follows the same outline as the proof of Claim \ref{modulus_psi_1}.
\end{proof}

Therefore, for any $\lambda\in\Bbb C\setminus\left(\left(-i\infty,-\dfrac{i}{2\sqrt{CL}}\right]\bigcup\left[\dfrac{i}{2\sqrt{CL}},+i\infty\right)\bigcup\{0\}\right)$ we can write 
\begin{equation}\label{psixiCL}
\psi_1(\lambda)=1+\dfrac{1}{2CL\lambda^2}+\dfrac{1}{2CL\lambda^2}\xi(\gamma(\lambda)),
\end{equation}
where $\xi(\lambda)$ is the square root of ${4\lambda^2CL+1}$, such that $|\psi_1(\lambda)|< 1$. 

Let us denote $\gamma=4CL\lambda^2+1$. Let us consider $\gamma\in \Bbb C\setminus(-i\infty,0)$, i.e. $\gamma=re^{i\phi}$, $\phi\in \left(-\dfrac{\pi}{2},\dfrac{3\pi}{2}\right)$ in polar coordinates (the same cut of the complex plane as in Section \ref{section4}). Then there are two continuous functions $\xi_1, \xi_2$, which give square roots of $\gamma$: 
\begin{equation*}
\xi_1(\gamma)=\sqrt {r} e^{i\dfrac{\phi}{2}},\xi_2(\gamma)=-\xi_1(\gamma)
\end{equation*}
(see Figure \ref{figxi1xi2}).

Since 
\begin{equation*}
\gamma=4\lambda^2CL+1=4 CL \left(\Re \lambda\right)^2-4 CL\left(\Im \lambda\right)^2+1+i 8 CL (\Re\lambda)(\Im\lambda)
\end{equation*}

$\xi_1(\gamma(\lambda))$ and $\xi_2(\gamma(\lambda))$ are defined for all $\lambda\in \Bbb C\setminus \overline{\Lambda}$, where
\begin{equation*}
\overline{\Lambda}=\left\{4 CL \left(\Re \lambda\right)^2-4 CL\left(\Im \lambda\right)^2+1= 0 \mbox{ and } \left(\Re\lambda\right)\left(\Im\lambda\right)<0\right\},
\end{equation*}
see Figure \ref{figxilambdanotdefinedCL}.

\begin{figure}[H]
\centering
\begin{tikzpicture}[scale=0.7]

   \path [draw=none,fill=gray,semitransparent] (-5,-5) rectangle (5,5) ;

    \draw [->] (-5,0) -- (5,0) node [below left]  {$\Re \lambda$};
    \draw [->] (0,-5) -- (0,5) node [below right] {$\Im \lambda$};
   \draw[white, line width = 0.60mm]  plot[smooth,domain=-4.85:0] (\x, {sqrt((\x)^2+4)}); 
    \draw[white, line width = 0.60mm]  plot[smooth,domain=0:4.8] (\x, {-sqrt((\x)^2+4)});
    \draw[dashed, line width = 0.30mm]  plot[smooth,domain=-4.6:0] (\x, {sqrt((\x)^2+4)});
    \draw[dashed, line width = 0.30mm]  plot[smooth,domain=0:4.55] (\x, {-sqrt((\x)^2+4)});

   \node [right,black] at (0,+2) {$\dfrac{i}{2\sqrt{CL}}$};
   \node [left,black] at (0,-2) {$-\dfrac{i}{2\sqrt{CL}}$};

\end{tikzpicture}
\caption{The domain of $\xi_{1,2}(\gamma(\lambda))$}
\label{figxilambdanotdefinedCL}
\end{figure}

Since the functions
\begin{equation*}
\left|1+\dfrac{1}{2CL\lambda^2}+\dfrac{1}{2CL\lambda^2}\xi_1(\gamma(\lambda))\right| \mbox { and }\left|1+\dfrac{1}{2CL\lambda^2}+\dfrac{1}{2CL\lambda^2}\xi_2(\gamma(\lambda))\right|
\end{equation*}
are continuous on $\lambda$, the choice of the function $\xi_1(\gamma(\lambda))$ or $\xi_2(\gamma(\lambda))$ in \eqref{psixiCL}, can not change inside the domains 
\begin{equation*}
\Omega_1=\left\{\Re^2 \lambda-\Im^2 \lambda>-{\dfrac{1}{4CL}}\right\}\bigcup\left\{\Re^2 \lambda-\Im^2 \lambda<-{\dfrac{1}{4CL}}, (\Im \lambda)(\Re \lambda)>0 \right\},
\end{equation*}
\begin{equation*}
\Omega_2=\left\{\Re^2 \lambda-\Im^2 \lambda<-{\dfrac{1}{4CL}}, \Im \lambda>0, \Re \lambda<0 \right\}
\end{equation*}
and
\begin{equation*}
\Omega_3=\left\{\Re^2 \lambda-\Im^2 \lambda<-{\dfrac{1}{4CL}}, \Im \lambda<0, \Re \lambda>0 \right\}
\end{equation*}
(see Figure \ref{figxilambdanotdefined2CL}).

\begin{figure}[H]
\centering
\begin{tikzpicture}[scale=0.7]

   \path [draw=none,fill=gray,semitransparent]
   (-5,0) -- plot[ smooth,domain=-4.85:0] (\x, {sqrt((\x)^2+4)})-- (0,0) -- cycle ;

   \path [draw=none,fill=gray,semitransparent]
   (5,0) -- plot[ smooth,domain=-4.85:0] (-\x, -{sqrt((\x)^2+4)})-- (0,0) -- cycle ; 

   \path [draw=none,fill=gray,semitransparent] (-5,-5) rectangle (0,0) ;
   \path [draw=none,fill=gray,semitransparent] (5,5) rectangle (0,0) ;


    \draw [->] (-5,0) -- (5,0) node [below left]  {$\Re \lambda$};
    \draw [->] (0,-5) -- (0,5) node [below right] {$\Im \lambda$};
   \draw[white, line width = 0.60mm]  plot[ smooth,domain=-4.9:0] (\x, {sqrt((\x)^2+4)}); 
    \draw[white, line width = 0.60mm]  plot[smooth,domain=0:4.8] (\x, {-sqrt((\x)^2+4)});
    \draw[dashed, line width = 0.30mm]  plot[smooth,domain=-4.6:0] (\x, {sqrt((\x)^2+4)});
    \draw[dashed, line width = 0.30mm]  plot[smooth,domain=0:4.55] (\x, {-sqrt((\x)^2+4)});
    \draw[red, line width = 0.70mm]  (0,2) -- (0,5); 
    \draw[red, line width = 0.70mm]  (0,-2) -- (0,-5); 

   \node [right,black] at (0,+2) {${\dfrac{i}{2\sqrt{CL}}}$};
   \node [left,black] at (0,-2) {$-{\dfrac{i}{2\sqrt{CL}}}$};
   \node [right,gray] at (3,3.5) {$\Omega_1$};
   \node [left,gray] at (-1,3.5) {$\Omega_2$};
   \node [left,gray] at (2,-3.5) {$\Omega_3$};

\end{tikzpicture}
\caption{The domains $\Omega_1$, $\Omega_2$ and $\Omega_3$ for a $CL$-network.}
\label{figxilambdanotdefined2CL}
\end{figure}
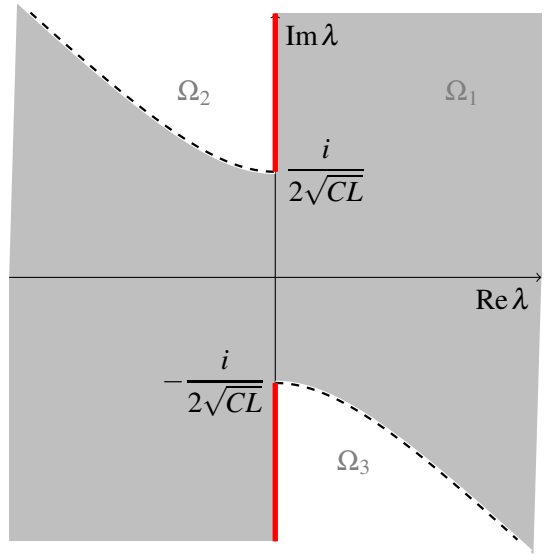

Taking $\lambda=2\in \Omega_1$ we have
\begin{equation*}
\left|1+\dfrac{1}{2CL\lambda^2}+\dfrac{1}{2CL\lambda^2}\xi_1(\gamma(\lambda))\right|
=\left|1+\dfrac{1}{8CL}+\dfrac{1}{8CL}\sqrt{16CL+1}\right|>1.
\end{equation*}
Therefore,
\begin{equation*}
\psi_1(\lambda)=1+\dfrac{1}{2CL\lambda^2}+\dfrac{\xi_2(\gamma(\lambda))}{2CL\lambda^2}, \lambda\in\Omega_1.
\end{equation*}

For the domain $\Omega_2$ in this case we should use a small trick, since taking square root from arbitrary complex number is not easy. This is also a reason, why a practical application of our method for arbitrary ladder network (i.e. with arbitrary $\alpha^{(\lambda)}$ and $\beta^{(\lambda)}$) can be complicated.

Let us take $\gamma=-3-4i$. Then 
\begin{equation*}
\lambda^2=\dfrac{\gamma-1}{4CL}=-\dfrac{1}{CL}-\dfrac{i}{CL}.
\end{equation*}
From the other hand, $\lambda^2=(\Re \lambda)^2-(\Im \lambda)^2+2(\Re \lambda)(\Im \lambda)$. Therefore, we have 
\begin{equation*}
(\Re \lambda)^2-(\Im \lambda)^2=-\dfrac{1}{CL}<-\dfrac{1}{4CL}\mbox{ and }(\Re\lambda) (\Im\lambda)=-\dfrac{1}{CL}<0,
\end{equation*}
i.e. $\lambda\in \Omega_2$ or  $\lambda\in \Omega_3$. Indeed, there exist exactly two complex numbers $\lambda_1, \lambda_2$ such that $\gamma(\lambda_{1,2})=-3-4i$. Without loss of generality we can assume, that $\lambda_1\in \Omega_2$. Then $\lambda_2\in\Omega_3$ and, due to the symmetry $\xi_1(\lambda)=\xi_1(-\lambda)$ and a representation of $\psi_1(\lambda)$, we have $\psi_1(\lambda_1)=\psi_1(\lambda_2)$.  Let us calculate $\psi_1(\lambda_1)$. Using $\lambda_1^2=-\dfrac{1}{CL}-\dfrac{i}{CL}$, we obtain
\begin{equation*}
\begin{split}
\biggl|1\biggr.&-\dfrac{1}{4}+\dfrac{i}{4}+\biggl.\left(-\dfrac{1}{4}+\dfrac{i}{4}\right)\xi_1(-3-4i)\biggr|\\
=&\left|1-\dfrac{1}{4}+\dfrac{i}{4}+\left(-\dfrac{1}{4}+\dfrac{i}{4}\right)(-1+2i)\right|\\
=&\left|\dfrac{1}{2}-i\dfrac{1}{2}\right|<1.
\end{split}
\end{equation*}
Therefore,
\begin{equation*}
\psi_1(\lambda)=1+\dfrac{1}{2CL\lambda^2}+\dfrac{\xi_1(\gamma(\lambda))}{2CL\lambda^2}, \lambda\in\Omega_2\cup\Omega_3.
\end{equation*}

Therefore, we can calculate the effective admittance of an infinite $CL$-ladder for any  $\lambda \in \Bbb C\setminus\left(\overline\Lambda\bigcup\left(-i\infty,-\dfrac{i}{2\sqrt{CL}}\right)\bigcup\left(\dfrac{i}{2\sqrt{CL}},+i\infty\right)\bigcup\{0\}\right)$ as
\begin{equation*}
\P(\lambda)=C\lambda(1-\psi_1)=
\begin{dcases}
-\dfrac{1}{2L\lambda}-\dfrac{\xi_2(\gamma(\lambda))}{2L\lambda}, \lambda\in\Omega_1,\\
-\dfrac{1}{2L\lambda}-\dfrac{\xi_1(\gamma(\lambda))}{2L\lambda}, \lambda\in\Omega_2\cup\Omega_3,\\
\pm {i}\sqrt{\dfrac{C}{L}}, \lambda=\pm \dfrac{i}{2\sqrt{CL}}
\end{dcases}
\end{equation*}
since at the point $\lambda=0$ the admittances of some edges are not defined.

The effective admittance for the $\lambda\in\overline \Lambda$ we can calculate, considering another 
 cut of the plane $\gamma$. Moreover, since the cut $(-i\infty,0)$ has been chosen arbitrary, the limits of the effective admittances from the both sides of the curves $\overline \Lambda$ will coincide and give the required quantity. 

Therefore, we can calculate the effective admittance of the infinite network for any  $\lambda \in \Bbb C\setminus\left(\left(-i\infty,-\dfrac{i}{2\sqrt{CL}}\right)\bigcup\left(\dfrac{i}{2\sqrt{CL}},+i\infty\right)\bigcup\{0\}\right)$.

A natural question is whether the right and left limits of the effective admittance (or of the $\xi_1(\lambda)$, $\xi_2(\lambda)$) at the open intervals $\left(-\infty,-i{\dfrac{1}{2\sqrt{CL}}}\right)$ and $\left(i{\dfrac{1}{2\sqrt{CL}}},+\infty\right)$ coincide (see Figure \ref{figxi1xi2rightleftlimitsCL}).  The answer is negative.

\begin{figure}[H]
\centering
\begin{tikzpicture}[scale=0.7]

   \path [draw=none,fill=gray,semitransparent] (-5,-5) rectangle (5,5);
    \draw [->] (-5,0) -- (5,0) node [below left]  {$\Re \lambda$};
    \draw [->] (0,-5) -- (0,5) node [below right] {$\Im \lambda$};

    \draw[red, line width = 0.70mm]  (0,-5) -- (0,-2); 
    \draw[red, line width = 0.70mm]  (0,5) -- (0,2); 

   \node [right,black] at (0,+2) {$\dfrac{i}{2\sqrt{CL}}$};
   \node [left,black] at (0,-2) {$-\dfrac{i}{2\sqrt{CL}}$};

    \draw [->, blue] (-1,-3) -- (-0.2,-3);
    \draw [->, blue] (-1,-3.5) -- (-0.2,-3.5);
    \draw [->, blue] (-1,-4) -- (-0.2,-4);
    \draw [->, blue] (-1,2.5) -- (-0.2,2.5);
    \draw [->, blue] (-1,3) -- (-0.2,3);
    \draw [->, blue] (-1,3.5) -- (-0.2,3.5);
    \draw [->, blue] (-1,4) -- (-0.2,4);
    \draw [->, blue] (-1,4.5) -- (-0.2,4.5);

    \draw [->, OliveGreen] (1,-2.5) -- (0.2,-2.5);
    \draw [->, OliveGreen] (1,-3) -- (0.2,-3);
    \draw [->, OliveGreen] (1,-3.5) -- (0.2,-3.5);
    \draw [->, OliveGreen] (1,-4) -- (0.2,-4);
    \draw [->, OliveGreen] (1,-4.5) -- (0.2,-4.5);
    \draw [->, OliveGreen] (1,3) -- (0.2,3);
    \draw [->, OliveGreen] (1,3.5) -- (0.2,3.5);
    \draw [->, OliveGreen] (1,4) -- (0.2,4);

\end{tikzpicture}
\caption{The limits of $\xi_1(\lambda)$ and $\xi_2(\lambda)$ for a $CL$-network}
\label{figxi1xi2rightleftlimitsCL}
\end{figure}
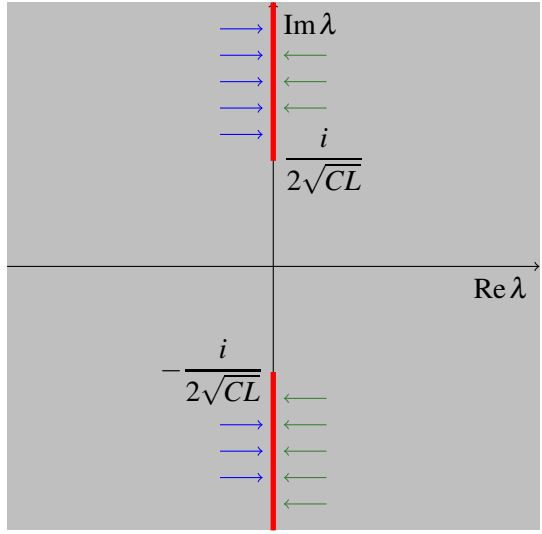

We will consider upper half case $\left(i{\dfrac{1}{2\sqrt{CL}}},+i\infty\right)$. The other case can be calculated similarly.

Let $\lambda=\epsilon+ i\omega$, $1\gg\epsilon>0$, $\omega\in\left({\dfrac{1}{2\sqrt{CL}}},+\infty\right)$, i.e. $\lambda\in\Omega_1$. Then
\begin{equation*}
\gamma(\epsilon+i\omega)=4 CL (\epsilon^2-\omega^2)+1+i 8 CL \epsilon\omega
\end{equation*}
and 
\begin{equation}\label{limitEffadmCL1}
\begin{split}
\lim_{\lambda\rightarrow+i\omega}\P(\lambda)=&\lim_{\epsilon\rightarrow 0}\left(-\dfrac{1}{2L(\epsilon+i\omega)}-\dfrac{\xi_2(\gamma(\epsilon+i\omega))}{2L(\epsilon+i\omega)}\right)\\
=&  -\dfrac{1}{2Li\omega}+\dfrac{i}{2Li\omega}\sqrt{4CL{\omega^2}-1}= \dfrac{i}{2L\omega}+\sqrt{\dfrac{C}{L}-\dfrac{1}{4L^2{\omega^2}}},
\end{split}
\end{equation}
since $\Re \gamma<0$, $|\Im \gamma|\ll 1$ provides $\Im \xi_2<0$. 

Let $\lambda=-\epsilon+ i\omega$, $1\gg\epsilon>0$, $\omega\in\left({\dfrac{1}{2\sqrt{CL}}},+\infty\right)$, i.e. $\lambda\in\Omega_2$. Then
\begin{equation*}
\gamma(-\epsilon+i\omega)=4 CL (\epsilon^2-\omega^2)+1-i 8 CL \epsilon\omega
\end{equation*}
and 
\begin{equation*}
\begin{split}
\lim_{\lambda\rightarrow-i\omega}\P(\lambda)=&\lim_{\epsilon\rightarrow 0}\left(-\dfrac{1}{2L(-\epsilon+i\omega)}-\dfrac{\xi_1(\gamma(-\epsilon+i\omega))}{2L(-\epsilon+i\omega)}\right)\\
=&  -\dfrac{1}{2Li\omega}-\dfrac{i}{2Li\omega}\sqrt{4CL{\omega^2}-1}= \dfrac{i}{2L\omega}-\sqrt{\dfrac{C}{L}-\dfrac{1}{4L^2{\omega^2}}},
\end{split}
\end{equation*}
since $\Re \gamma<0$, $|\Im \gamma|\ll 1$ provides $\Im \xi_1>0$. 

The limit \eqref{limitEffadmCL1} coincides with the one, stated by  S.H. Yoon in \cite{Yoon} . Indeed, by \cite[p. 286]{Yoon} we have the effective admittance
\begin{align*}
\P=&\dfrac{1}{Z}=\dfrac{1}{{-i}/{(2\omega C)}+\sqrt{L/C-1/(2\omega C)^2}}=\dfrac{{-i}/{(2\omega C)}-\sqrt{L/C-1/(2\omega C)^2}}{({-i}/{(2\omega C)})^2-(L/C-1/(2\omega C)^2)}\\
=&-\dfrac{C}{L}\left(\dfrac{-i}{2\omega C}-\sqrt{\dfrac{L}{C}-\dfrac{1}{(2\omega C)^2}}\right)=\dfrac{i}{2L\omega }+\sqrt{\dfrac{C}{L}-\dfrac{1}{4L^2\omega^2}}.
\end{align*}

Note, that the above described method for calculation of the effective admittances for $LC-$ and $CL-$ networks can be used for any infinite ladder network with the admittances $\alpha^{(\lambda)}, \beta{(\lambda)}$, although the choice of the one of two solutions of the quadratic equation
\begin{equation}\label{psicomplex}
\psi^2-\left(2+\mu\right)\psi+1=0.
\end{equation}  
in different domains of the complex plane $\lambda$ can be quite complicated to do practically for a given infinite ladder network.
Therefore, we have presented a general method for calculation of the effective impedance of infinite ladder network and illustrated it with two physically important infinite ladder networks.

\section*{Acknowledgement}
The author thanks her scientific advisor, Professor Alexander Grigor'yan, for helüful and fruitful discussions on the topic.

\end{document}